\newtheorem{theorem}{Theorem}[section]
\newtheorem{lemma}[theorem]{Lemma}
\newtheorem{trditev}[theorem]{Proposition}
\theoremstyle{definition}
\newtheorem{remark}[theorem]{Remark}
\numberwithin{equation}{section}
\def\max{\mathop{\rm max}\nolimits}
\def\Span{\mathop{\rm Span}\nolimits}
\begin{document}
\title[]
{On regular Stein neighborhoods of a union of two totally real planes in $\mathbb{C}^2$}
\author{Tadej Star\v{c}i\v{c}}
\address{Faculty of Education, University of Ljubljana, Kardeljeva Plo\v{s}\v{c}ad 16, 1000 Lju\-blja\-na, Slovenia and 
Institute of Mathematics, Physics and Mechanics, Jadranska 19, 1000 Ljubljana, Slovenia}
\email{tadej.starcic@pef.uni-lj.si}
\subjclass[2000]{32E10, 32Q28, 32T15, 54C15}
\date{April 7, 2016}


\keywords{Stein neighborhoods, totally real planes, strongly pseudoconvex domains, strong deformation retraction\\
\indent Research supported by grants P1-0291 and J1-5432 from ARRS, Republic of Slovenia.}

\begin{abstract}
In this paper we find regular Stein neighborhoods of a union of totally real planes
$M=(A+iI)\mathbb{R}^2$ and $N=\mathbb{R}^2$ in $\mathbb{C}^2$, provided that the entries of a real $2 \times 2$ matrix $A$ are sufficiently small. A key step in 
our proof is a local construction of a suitable function $\rho$ near the origin. The sublevel sets of $\rho$ are strongly Levi pseudoconvex and admit 
strong deformation retraction to $M\cup N$. 
\end{abstract}

\maketitle

\section{Introduction}

The class of Stein manifolds is one of the most important classes of complex manifolds. There are many characterizations of Stein 
manifolds (see Remmert \cite{Remmert1}, Grauert \cite{lit9} and Cartan \cite{Car}). Also many classical problems in complex analysis 
are solvable on Stein manifolds (see the monographs \cite{lit21} and \cite{lit6}). Therefore it is a very useful property for a 
subset of a manifold to have open Stein neighborhoods.

On the other hand one would also like to understand the topology or the homotopy type of such neighborhoods. Also approximation 
theorems can be obtained if neighborhoods have further suitable properties (see Cirka \cite{Cir69}). Interesting results in this 
direction for real surfaces immersed (or embedded) into a complex surface were given by Forstneri\v{c} \cite[Theorem 2.2]{lit1n} and 
Slapar \cite{lit2n}. If $\pi:S\to X$ is a smooth immersion of a closed real surface into a complex surface with finitely many special 
double points and only flat hyperbolic complex points, then $\pi(S)$ has a basis of {\em regular} Stein neighborhoods; these are 
open Stein neighborhoods which admit a strong deformation retraction to $\pi (S)$ (for the precise definition see Sect. \ref{baza}). 
The problem is to find a good plurisubharmonic function locally near every double point (see \cite{F92,lit1n,lit2n}) or hyperbolic 
complex point (see \cite{lit2n}). We add here that elliptic complex points prevent the surface from having a basis of Stein 
neighborhoods due to the existence of Bishop discs (see \cite{Bi}), while the surface is locally polynomially convex at hyperbolic 
points by a result of Forstneri\v {c} and Stout (see \cite{FS}).

In this paper we consider a union of two totally real planes $M$ and $N$ in $\mathbb{C}^2$ with $M\cap N=\{ 0\}$. Every such union 
is complex-linearly equivalent to $\mathbb{R}^2\cup M(A)$, where $M(A)$ is the real span of the columns of the matrix $A+iI$. 
Moreover, $A$ is a real matrix determined up to real conjugacy and such that $A-iI$ is invertible. By a result of Weinstock (see \cite{Wein}) 
each compact subset of $\mathbb{R}^2\cup M(A)$ is polynomially convex if and only if $A$ has no purely imaginary eigenvalue of modulus 
greater than one. For matrices $A$ that satisfy this condition it is then reasonable to try to find regular Stein neighborhoods 
for $\mathbb{R}^2\cup M(A)$. If $A=0$ the situation near the origin coincides with the special double point of immersed real surface 
in complex surface mentioned above. When $A$ is diagonalizable over $\mathbb{R}$ with $\textrm{Trace}(A)=0$, a regular Stein neighborhood 
basis has been constructed by Slapar (see \cite[Proposition 3]{lit2n}).

In Sect. \ref{baza} we prove that regular Stein neighborhoods of $\mathbb{R}^2\cup M(A)$ in $\mathbb{C}^2$ can be constructed, if the entries 
of $A$ are sufficiently small. An important step in our proof is a local construction of a suitable function $\rho$ near the origin, depending 
smoothly on the entries of $A$. Furthermore, $\rho$ is strictly plurisubharmonic in complex tangent direction to its sublevel sets, and such 
that the sublevel sets shrink down to $M\cup N$. The Levi form of $\rho$ is a homogeneous polynomial of high degree and it is difficult to 
control its sign for bigger entries of $A$. It would also be interesting to generalize the construction to the case of a union of two totally 
real subspaces of maximal dimension in $\mathbb{C}^n$, though the computations of the Levi form would quickly get very lengthy and would be hard to handle.

Every Stein manifold of dimension $n$ can be realized as a $\textrm{CW}$-complex of dimension at most $n$ (see Andreotti and Frankel \cite{AF}). 
A natural question related to our problem is if one can find regular Stein neighborhoods of a handlebody obtained by attaching a totally real 
handle to a strongly pseudoconvex domain. For results in this directions see the monograph \cite{KK} and the papers by Eliashberg \cite{E}, 
Forstneri\v {c} and Kozak \cite{FHand} and others. We shall not consider this matter here.

\section{Preliminaries}\label{Sb}


A real linear subspace in $\mathbb{C}^n$ is called {\em totally real} if it contains no complex subspace. It is clear that the real dimension of 
a totally real subspace in $\mathbb{C}^n$ is at most $n$.

Now let $M$ and $N$ be two linear totally real subspaces of real dimension $n$ in $\mathbb{C}^n$, intersecting only at the origin. The next lemma 
describes the basic properties of a union of such subspaces $M\cup N$. It is well known and it is not difficult to prove. 
We refer to \cite{Wein} for the proof of the lemma and a short note on linear totally real subspaces in $\mathbb{C}^n$.

\begin{lemma}\label{TR}
Let $M$ and $N$ be two linear totally real subspaces of real dimension $n$ in $\mathbb{C}^n$ and with intersection $M \cap N=\{ 0\}$. Then there exists a 
non-singular complex linear transformation which maps $N$ onto $\mathbb{R}^n\approx (\mathbb{R}\times \{0\})^n\subset \mathbb{C}^n$ and $M$ onto 
$M(A)=(A + i I)\mathbb{R}^n$, where $A$ is a matrix with real entries and such that $i$ is not an eigenvalue of $A$. Moreover, any non-singular real 
matrix $S$ maps $M(A)\cup \mathbb{R}^n$ onto $M(SAS^{-1})\cup \mathbb{R}^n$. 
\end{lemma}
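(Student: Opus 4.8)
The plan is to normalize $N$ with a single complex-linear automorphism, to read the matrix $A$ off the imaginary parts of a basis of the image of $M$, and then to extract the spectral condition on $A$ from the hypothesis that $M$ is totally real.

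First I would use the standard fact that a real subspace $V\subset\mathbb{C}^n$ contains no complex line exactly when $V\cap iV=\{0\}$; when moreover $\dim_{\mathbb{R}}V=n$ this forces $V\oplus iV=\mathbb{C}^n$, so any real basis of $V$ is simultaneously a $\mathbb{C}$-basis of $\mathbb{C}^n$. Applying this to $N$ and sending such a basis to the standard basis defines $T\in GL_n(\mathbb{C})$ with $T(N)=\Span_{\mathbb{R}}\{e_1,\dots,e_n\}=\mathbb{R}^n$, while $T(M)$ is again a totally real $n$-plane meeting $\mathbb{R}^n$ only at the origin (a $\mathbb{C}$-linear isomorphism preserves totally real subspaces and real dimensions). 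So I may assume $N=\mathbb{R}^n$ from the start. Next, the $\mathbb{R}$-linear projection $\pi\colon\mathbb{C}^n\to\mathbb{R}^n$ onto the imaginary part is injective on $M$, since its kernel there is $M\cap\mathbb{R}^n=\{0\}$; by a dimension count it is an $\mathbb{R}$-linear isomorphism $M\to\mathbb{R}^n$. Hence $M$ has a unique real basis of the form $u_j=c_j+ie_j$ with $c_j\in\mathbb{R}^n$, and letting $A$ be the real matrix with columns $c_1,\dots,c_n$ gives $u_j=(A+iI)e_j$ and $M=(A+iI)\mathbb{R}^n=M(A)$.

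The remaining point, and the only one requiring a moment's care, is that total reality of $M(A)$ forces $i$ not to be an eigenvalue of $A$. I would prove the clean equivalence that $M(A)$ is totally real if and only if $\det(A-iI)\neq 0$. Note $A+iI$ is always injective on $\mathbb{R}^n$ (its imaginary part is the identity), so $M(A)$ is a genuine real $n$-plane; using the factorization $A^2+I=(A+iI)(A-iI)$ one checks that a nonzero vector in $M(A)\cap iM(A)$ exists precisely when some nonzero real $x$ satisfies $(A^2+I)x=0$, that is, when $\det(A^2+I)=\det(A+iI)\det(A-iI)=0$. Since $A$ is real these two determinants are complex conjugates, so this is equivalent to $\det(A-iI)=0$, i.e.\ to $i$ being an eigenvalue of $A$. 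Applying this to $M(A)=T(M)$ gives the normal form. Finally, a nonsingular real matrix $S$ acts $\mathbb{C}$-linearly on $\mathbb{C}^n$ and fixes $\mathbb{R}^n$, and $S(A+iI)\mathbb{R}^n=(SAS^{-1}+iI)S\mathbb{R}^n=(SAS^{-1}+iI)\mathbb{R}^n=M(SAS^{-1})$; conjugation preserves the spectrum, so $i\notin\operatorname{spec}(SAS^{-1})$ and the image $M(SAS^{-1})\cup\mathbb{R}^n$ is again of the asserted form. The only genuine obstacle is the bookkeeping in the totally-real $\Leftrightarrow$ spectral step; everything else is routine linear algebra.
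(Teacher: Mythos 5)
The paper does not supply its own proof of Lemma~\ref{TR}; it simply cites Weinstock~\cite{Wein}, so there is nothing internal to compare against. Your argument is correct and is essentially the standard one found in that reference: use $N\oplus iN=\mathbb{C}^n$ to send a real basis of $N$ to the standard basis, so $N$ becomes $\mathbb{R}^n$; observe that the imaginary-part projection restricted to $M$ has kernel $M\cap\mathbb{R}^n=\{0\}$ and is therefore an isomorphism onto $\mathbb{R}^n$, which produces $A$ with $M=(A+iI)\mathbb{R}^n$; and unwind $M(A)\cap iM(A)\neq\{0\}$ into the existence of a nonzero real solution of $(A^2+I)x=0$, equivalently $\det(A^2+I)=|\det(A-iI)|^2=0$, so total reality forces $i$ not to be an eigenvalue of $A$. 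The conjugation claim follows from $S(A+iI)\mathbb{R}^n=(SAS^{-1}+iI)\mathbb{R}^n$ and $S\mathbb{R}^n=\mathbb{R}^n$, as you note.
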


Our goal is to construct Stein neighborhoods of a union of totally real planes $M$ and $N$ in $\mathbb{C}^2$, intersecting only at the origin 
(see Sect. \ref{baza}). It is easy to see that non-singular linear transformations map Stein domains onto Stein domains and totally real subspaces 
onto totally real subspaces. According to Lemma \ref{TR} the general situation thus reduces to the case 
$N=\mathbb{R}^2\approx (\mathbb{R}\times \{0\})^2\subset \mathbb{C}^2$ and $M=(A + i I)\mathbb{R}^2$, where $A$ satisfies one of three conditions 
listed below. (In each case we also add an orthogonal complement $M^{\bot}$ to $M$ and the squared Euclidean distance function $d_M$ to $M$ in 
$\mathbb{C}^2= (\mathbb{R}+ i\mathbb{R})^2\approx \mathbb{R}^4$; they are all given in corresponding real coordinates 
$(x,y,u,v)\approx (x+iy,u+iv)\in \mathbb{C}^2$.)

\begin{enumerate}
\item[{\bf Case 1.}] $A$ is diagonalizable over $\mathbb{R}$, i.e. 
            $A=\left [\begin{array}{c c}
                                    a & 0\\
	                            0 & d
                      \end{array}\right], \, a,d\in\mathbb{R}$,
	    \begin{align}\label{dD}
	                 & M = \Span\{(a,1,0,0),(0,0,d,1)\},\quad M^{\bot}=\Span\{(1,-a,0,0),(0,0,1,-d)\},\\
	                 & d_M(x,y,u,v) =\frac{(u-dv)^2}{1+d^2}+\frac{(x-ay)^2}{1+a^2}.\nonumber
	    \end{align}

\item[{\bf Case 2.}] $A$ has complex eigenvalues (but $i$ is not an eigenvalue), i.e. 
	                    $A=\left [\begin{array}{c c}
                                                       a & -d\\
	                                               d & a
                                      \end{array}\right],\\ 
                            a,d\in\mathbb{R}, \, d\neq 0,\, a^2+(1-d^2)^2\neq 0$,
             \begin{align}\label{dC}
	    	       	     & M=\Span\{(a,1,d,0),(-d,0,a,1)\},\\
	    	       	     & M^{\bot}=\Span\{(0,-d,1,-a),(1,-a,0,d)\},\nonumber \\
	    	       	     & d_M (x,y,u,v) =\frac{(u-dy-av)^2}{1+a^2+d^2}+\frac{(x-ay+dv)^2}{1+a^2+d^2}.\nonumber
	     \end{align}

\item[{\bf Case 3.}] $A$ is non-diagonalizable, i.e.  
	                     $A=\left [\begin{array}{c c}   
	                                            a & d\\
	                                            0 & a
                                       \end{array}\right], \,\, a\in\mathbb{R}, \,d \neq 0$,
	        \begin{align}\label{dJ}
	                        & M=\Span \{(a,1,0,0),(d,0,a,1)\}, \\
	                        & M^{\bot}=\Span\{(0,0,1,-a),(1,-a,\frac{-ad}{1+a^2},\frac{-d}{1+a^2})\}, \nonumber \\
	                        & d_M(x,y,u,v)=\frac{(u-av)^2}{1+a^2}+\frac{((1+a^2)(x-ay)-dau-dv)^2}{(1+a^2)((1+a^2)^2+d^2)} .\nonumber
	        \end{align}               
\end{enumerate}

Our construction of Stein domains involves strictly plurisubharmonic functions and strong pseudoconvexity. Here we recall the basic definitions and 
establish the notation.

Given a $\mathcal{C}^2$-function $\rho$ on a complex manifold $X$, 
we define the {\em Levi form} by
\[
	\mathcal{L}_{(z)}(\rho;\lambda) = \left\langle \partial\overline{\partial} \rho(z), \lambda \wedge \bar \lambda\, \right\rangle, \qquad 
                                           z\in X,\quad \lambda \in T_z^{1,0}X\approx T_zX,
\]
where $T_z^{1,0}X$ is the eigenspace corresponding to the eigenvalue $i$ 
of the underlying almost complex structure operator $J$ on the complexified tangent bundle $\mathbb{C}\otimes_\mathbb{R}TX$. 
In local holomorphic coordinates $z=(z_1,\ldots,z_n)$ we have 
\[
   \mathcal{L}_{(z)}(\rho;\lambda) = \sum_{j,k =1}^n \frac{\partial^2 \rho}{\partial z_j\partial\bar{z}_k }(z) \lambda_j\overline{\lambda}_k, \quad 
                                    \lambda=\sum_{j=1}^n \lambda_j \frac{\partial}{\partial z_j}.
\]
A function $\rho$ is {\em strictly plurisubharmonic} if and only if 
$\mathcal{L}_{(z)}(\rho;\cdot)$ is a positive definite Hermitian quadratic form for all $z\in X$.

Let $\rho\colon \mathbb{C}^n\to\mathbb{R}$ be a $\mathcal{C}^2$ defining function for $\Omega\subset\mathbb{C}^n$, i.e. 
$\Omega=\{z\in\mathbb{C}^n\colon \rho(z)<c\}$ and $b\Omega=\{z\in\mathbb{C}^n\colon \rho(z)=c\}$ for some $c\in \mathbb{R}$. If also 
$d\rho(z)\neq 0$ for every $z\in b\Omega$ we say that $\Omega$ has $\mathcal{C}^2$-boundary.

A domain $\Omega\subset \mathbb{C}^n$ is {\em strongly Levi pseudoconvex} if for every $z\in b\Omega$ the Levi form of $\rho$ is positive in all 
complex tangent directions to the boundary $b\Omega$:
\[
              \mathcal{L}_{(z)}(\rho;\lambda)>0, \quad z\in b\Omega, \quad \lambda\in T_z ^{\mathbb{C}}(b \Omega):= T_z (b \Omega)\cap  i T_z (b \Omega).
\]
If $\rho$ strictly plurisubharmonic in a neighborhood of the boundary $b\Omega$, a domain $\Omega$ is said to be {\em strongly pseudoconvex}.

Throughout this paper $(z_1,z_2)$ will be standard holomorphic coordinates and $(x,y,u,v)$ corresponding real coordinates on $\mathbb{C}^2$ with respect 
to $z_1=x+iy$ and $z_2=u+iv$. Holomorphic and antiholomorphic derivatives are in standard notation denoted by
$\frac{\partial}{\partial z_1}=\frac{1}{2}\left(\frac{\partial}{\partial x}-i\frac{\partial}{\partial y}\right)$,
$\frac{\partial}{\partial \overline{z}_1}=\frac{1}{2}\left(\frac{\partial }{\partial x}+i\frac{\partial}{\partial y}\right)$
or briefly by 
$\frac{\partial \rho}{\partial z_1}=\rho_{z_1}$, $\frac{\partial \rho}{\partial \overline{z}_1}=\rho_{\overline{z}_1}$,
and the same for $\frac{\partial}{\partial z_2}$, $\frac{\partial}{\partial \overline{z}_2}$.

If $\rho$ defines a domain $\Omega\subset\mathbb{C}^2$, we have  
\[
                  T_z^{\mathbb{C}}(b\Omega)=\{(w_1,w_2)\colon\frac{\partial\rho}{\partial z_1}(z)\,w_1+\frac{\partial\rho}{\partial z_2}(z)\,w_2=0\}
\]
and we denote the vector in complex tangent direction to the boundary $b\Omega$ by
\begin{equation}\label{vektorC}
                 \lambda_{\rho}=\left(\frac{\partial\rho}{\partial z_2},-\frac{\partial \rho}{\partial z_1}\right)\in T^{\mathbb{C}}(b\Omega).
\end{equation}
A straightforward calculation then gives
\begin{align}\label{formaC0}
    \mathcal{L}(\rho;\lambda_{\rho}) &= 
                                         \rho_{z_1\overline{z}_1}\rho_{z_2}\overline{\rho}_{z_2}+\rho_{z_2\overline{z}_2}\rho_{z_1}\overline{\rho}_{z_1}
                                         -\rho_{z_2\overline{z}_1}\rho_{z_1}\overline{\rho}_{z_2}-\rho_{z_1\overline{z}_2}\rho_{z_2} \overline{\rho}_{z_1}\\
                                     &=
                                          \rho_{z_1\overline{z}_1}|\rho_{z_2}|^2+\rho_{z_2\overline{z}_2}|\rho_{z_1}|^2
                                          -2\textrm{Re}(\rho_{z_2\overline{z}_1}\rho_{z_1}\overline{\rho}_{z_2}).\nonumber 
\end{align}
In terms of real partial derivatives, we have
\begin{align}\label{formaC}
\mathcal{L}(\rho;\lambda_{\rho}) &=
                                  \frac{1}{16}\left(\frac{\partial^2\rho}{\partial x^2}+\frac{\partial^2\rho}{\partial y^2}\right)
                                  \left(\left(\frac{\partial \rho}{\partial u}\right)^2+\left(\frac{\partial \rho}{\partial v}\right)^2\right) \\
                                 &\quad 
                                  +\frac{1}{16}\left(\frac{\partial^2\rho}{\partial u^2}+\frac{\partial^2 \rho}{\partial v^2}\right)
                                  \left(\left(\frac{\partial \rho}{\partial x}\right)^2+\left(\frac{\partial\rho}{\partial y}\right)^2\right)\nonumber  \\
                                 & \quad
                                  -\frac{1}{8}\left(\frac{\partial^2\rho}{\partial x\partial u}+\frac{\partial^2\rho}{\partial y\partial v}\right)
                                              \left(\frac{\partial\rho}{\partial x}\frac{\partial\rho}{\partial u}
                                                    +\frac{\partial \rho}{\partial y}\frac{\partial \rho}{\partial v}\right)\nonumber \\
			         & \quad
			          +\frac{1}{8}\left(-\frac{\partial^2\rho}{\partial x\partial v}+\frac{\partial^2\rho}{\partial y\partial u}\right)
			                      \left(\frac{\partial\rho}{\partial v} \frac{\partial \rho}{\partial x}
			                            -\frac{\partial\rho}{\partial y}\frac{\partial\rho}{\partial u}\right).\nonumber 
\end{align}

\section{Local construction at the intersection}

In this section we give a local construction of regular Stein neighborhoods near the intersection $M\cap N=\{0\}$ of a union of two 
totally real planes $M\cup N\subset \mathbb{C}^2$. Our goal is to find a function $\rho\colon \mathbb{C}^n\to \mathbb{R}$ satisfying 
the following properties:
\begin{enumerate}
\item \label{pogoj1} $M\cup N=\{\rho=0\}=\{\nabla\rho=0\}$,
\item \label{pogoj2} $\Omega_{\epsilon}=\{\rho<\epsilon\}$ is strongly Levi pseudoconvex for any sufficiently small $\epsilon>0$. 
\end{enumerate}
Observe that in this case the flow of the negative gradient vector field $-\nabla\rho$ gives us a strong deformation retraction of 
$\Omega_{\epsilon}$ to $M\cup N$.

In order to fulfil the conditions (\ref{pogoj1}) and (\ref{pogoj2}) one might take linear combinations of products of squared Euclidean 
distance functions to $M$ and $N$ in $\mathbb{C}^2$ respectively. However, the Levi form of such a function would be a polynomial of high 
degree and therefore very difficult to control. In order to simplify the situation we shall prefer to work with homogeneous polynomials. 
The following lemma is a preparation for our key result Lemma \ref{lema11}.

\begin{lemma}\label{lemamn}
Let $A$, $M$ and $d_M$ be of the form as in (\ref{dD}), (\ref{dC}) or (\ref{dJ}) and let $N=\mathbb{R}^2$ with $d_N(x,y,u,v)=y^2+v^2$. Then the function 
\[
       \rho=d_M ^{\alpha+1}d_N^{\beta}+d_M ^{\alpha}d_N ^{\beta+1}, \quad \alpha,\beta\geq 1
\]
satisfies the following properties:
\begin{enumerate}
\item \label{1} $M\cup N=\{\rho=0\}=\{\nabla\rho=0\}$.
\item \label{3} There exist constants $r>0$ and $\epsilon_0>0$ such that $\rho$ is strictly plurisubharmonic on 
       $(\{d_M<\epsilon_0 \}\cup \{d_N<\epsilon_0 \})\setminus (M\cup N\cup \overline{B}_r)$, where $B_r$ is a ball centered at 
       $0$ and with radius $r$. In addition, for $\alpha=\beta=1$ the Levi form of $\rho$ is positive on a neighborhood of 
       $(M\cup N)\setminus \{0\}$, and for $\alpha,\beta\geq 2$ it vanishes on $M\cup N$. 
\item \label{4} For any $\epsilon>0$ and $\Omega_{\epsilon}=\{\rho<\epsilon\}$ the Levi form of $\rho$ in complex tangent direction 
      to the boundary $b\Omega_{\epsilon}$ is of the form: 
\begin{equation*}
                  \mathcal{L}(\rho;\lambda_{\rho})=\frac{1}{k}\,d_M^{3\alpha-2} d_{N} ^{3\beta-2} P, \qquad \lambda_{\rho} \in T^{\mathbb{C}}(b\Omega_\epsilon),
\end{equation*}
where $k$ is a positive polynomial in the entries of $A$, and $P$ is a homogeneous polynomial of degree $10$ in variables $x,y,u,v$ 
and with coefficients depending polynomially on the entries of $A$.
\end{enumerate}
\end{lemma}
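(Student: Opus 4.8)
The plan is to verify each of the three claims in turn, exploiting the explicit formulas for $d_M$ and $d_N$ and the homogeneity of the building blocks.

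\textbf{Proof of (\ref{1}).} Since $d_M$ and $d_N$ are nonnegative quadratic forms vanishing exactly on $M$ and $N$ respectively, the product $\rho = d_M^\alpha d_N^\beta (d_M + d_N)$ is nonnegative and vanishes exactly on $M\cup N$. For the gradient, I would note that along $M$ (where $d_M=0$ and $d_N>0$ away from the origin) we have, since $\alpha\ge 1$, that $\nabla\rho$ is a combination of terms each carrying a positive power of $d_M$ or a factor $\nabla d_M$ evaluated where $d_M=0$; more directly, $\rho$ vanishes to order at least $2\alpha\ge 2$ along $M$, hence $\nabla\rho=0$ on $M$, and symmetrically on $N$. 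At the origin all terms vanish to high order. Conversely, off $M\cup N$ both $d_M,d_N>0$ and one checks $\nabla\rho\ne 0$ by pairing with the radial (dilation) vector field: since $\rho$ is homogeneous of degree $2(\alpha+\beta+1)\ge 6$, Euler's identity gives $\langle \nabla\rho, (x,y,u,v)\rangle = 2(\alpha+\beta+1)\rho > 0$ there.

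\textbf{Proof of (\ref{4}).} This is the computational heart and I would do it first in a normalized form. Write $d_M = Q_1/k_1$, $d_N = y^2+v^2$, where $Q_1$ is a sum of two squares of real linear forms and $k_1>0$ depends polynomially on the entries of $A$ (read off from (\ref{dD}), (\ref{dC}), (\ref{dJ})). Plug $\rho = d_M^{\alpha+1}d_N^\beta + d_M^\alpha d_N^{\beta+1}$ into formula (\ref{formaC}) for $\mathcal{L}(\rho;\lambda_\rho)$. Every first derivative of $\rho$ is $d_M^{\alpha-1}d_N^{\beta-1}$ times a polynomial, and every second derivative is $d_M^{\alpha-1}d_N^{\beta-1}$ times a polynomial plus $d_M^{\alpha-2}d_N^{\beta-1}$ (and symmetric) times a polynomial; the products appearing in (\ref{formaC}) pair one second derivative with two first derivatives, so the common factor that can be pulled out is $d_M^{(\alpha-2)+2(\alpha-1)}d_N^{(\beta-2)+2(\beta-1)} = d_M^{3\alpha-4}d_N^{3\beta-4}$. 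A bookkeeping check of degrees shows the bracketed polynomial left over is homogeneous; after absorbing the two leftover factors of $d_M$ and the explicit $\tfrac1{16},\tfrac18$ constants into $d_M^{3\alpha-2}d_N^{3\beta-2}/k$ with $k$ a positive power of $k_1$ times $16$, one is left with a homogeneous polynomial $P$. Tracking the degree: $\rho$ has degree $2(\alpha+\beta+1)$, so $\mathcal{L}(\rho;\lambda_\rho)$ has degree $3\cdot 2(\alpha+\beta+1)-6 = 6\alpha+6\beta$; subtracting $\deg(d_M^{3\alpha-2}d_N^{3\beta-2}) = 2(3\alpha-2)+2(3\beta-2) = 6\alpha+6\beta-8$ leaves $\deg P = 8$ — so in fact I expect to discover $P$ has degree $8$, not $10$; I would recheck the derivative-order bookkeeping (the second-derivative term of lowest order in $d_M$ is the $d_M^{\alpha-2}$ one, but it comes multiplied by $\nabla d_M\cdot\nabla d_M$ of degree $2$, so it survives) and reconcile with the stated degree $10$, the discrepancy presumably coming from whether one pulls out $d_M^{3\alpha-2}$ or the larger $d_M^{3\alpha-4}$; with the choice $d_M^{3\alpha-2}d_N^{3\beta-2}$ as stated, $\deg P$ should be $8$, and if the paper insists on $10$ there is an extra factor I will locate by expanding one case explicitly.

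\textbf{Proof of (\ref{3}).} Away from $M\cup N$ and away from $0$, homogeneity reduces strict plurisubharmonicity to a statement on a compact set. Using the full Levi form (not just in the tangent direction), each mixed fourth-order-in-derivatives quantity carries, by the same computation as above, an overall factor $d_M^{\alpha-1}d_N^{\beta-1}$ on each $\partial\bar\partial$ entry, and on the region $d_M,d_N>0$ this factor is positive; what remains is a Hermitian form whose entries are polynomials in $(x,y,u,v)$ and the entries of $A$. For $A=0$ one can compute this form directly and check it is positive definite on $(\{d_M<\epsilon_0\}\cup\{d_N<\epsilon_0\})\setminus(M\cup N\cup\overline B_r)$; by continuity of eigenvalues it stays positive definite for $A$ small — but note the lemma statement as written does not restrict $A$, so I would instead argue homogeneity plus an explicit sign computation of $P$ and of the normal part on the relevant region, which is exactly where smallness of $A$ will enter in Lemma \ref{lema11}. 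For the addendum: when $\alpha=\beta=1$ the factor $d_M^{\alpha-1}d_N^{\beta-1}=1$, so near a point of $M\setminus\{0\}$ (where $d_M=0$, $d_N>0$) the Levi form reduces to the Levi form of $d_M\cdot(\text{positive smooth})$, i.e. essentially $d_N^\beta$ times the Levi form of $d_M$, and since $d_M$ is a sum of squares of $\mathbb{R}$-linear forms that are $\mathbb{C}$-linearly independent (as $M$ is totally real), $\partial\bar\partial d_M$ is positive definite; hence $\mathcal{L}(\rho;\cdot)>0$ on a neighborhood of $(M\cup N)\setminus\{0\}$. When $\alpha,\beta\ge 2$, the factor $d_M^{\alpha-1}d_N^{\beta-1}$ multiplying every entry of $\partial\bar\partial\rho$ vanishes on $M\cup N$, so the Levi form vanishes identically there.

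\textbf{Main obstacle.} The genuine difficulty is part (\ref{4}): organizing the expansion of (\ref{formaC}) so that the claimed common factor $d_M^{3\alpha-2}d_N^{3\beta-2}$ is pulled out cleanly and the leftover $P$ is visibly homogeneous with polynomially-controlled coefficients, uniformly across the three cases (\ref{dD}), (\ref{dC}), (\ref{dJ}). The three cases differ in the linear forms defining $d_M$ and in $k_1$, so either one treats them separately (three long but parallel computations) or finds a uniform parametrization; I expect the cleanest route is to substitute real-linear coordinates adapted to $M$ and $N$ in each case, compute $P$ there, and then transport back, accepting that the coefficient dependence on the entries of $A$ is polynomial because only finitely many algebraic operations are used. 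Pinning down whether $\deg P$ is $8$ or $10$ is part of this same bookkeeping and should fall out of the first explicit case.
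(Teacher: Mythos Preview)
Your argument for (\ref{1}) is fine and in fact more detailed than the paper's (which dismisses it as immediate); the Euler identity is a clean way to see $\nabla\rho\ne 0$ off $M\cup N$.

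For (\ref{4}) there are two genuine problems. First, your degree count is off by $2$: in $\mathcal{L}(\rho;\lambda_\rho)=\sum \rho_{z_j\bar z_k}(\lambda_\rho)_j\overline{(\lambda_\rho)}_k$ one takes \emph{four} derivatives in total (two in the Hessian factor, one in each of the two gradient factors), so the degree is $3\cdot 2(\alpha+\beta+1)-4=6\alpha+6\beta+2$, not $6\alpha+6\beta$. Subtracting $\deg\bigl(d_M^{3\alpha-2}d_N^{3\beta-2}\bigr)=6\alpha+6\beta-8$ gives $\deg P=10$, as stated. Second, and more substantively, your naive bookkeeping only pulls out $d_M^{3\alpha-4}d_N^{3\beta-4}$; the ``two leftover factors of $d_M$'' you propose to absorb are \emph{not} automatic. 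The paper obtains the extra $d_M^2 d_N^2$ by writing $\lambda_\rho$ as a combination of $\lambda_{d_M}$ and $\lambda_{d_N}$ (formula (\ref{lambdaRoMN})), then using two cancellations: (i) $\sum_j (d_M)_{z_j}(\lambda_{d_M})_j=0$, which forces an extra $d_M^\alpha$ in $\sum_j (d_M)_{z_j}(\lambda_\rho)_j$; and (ii) the case-by-case identity $\mathcal{L}(d_M;\lambda_{d_M})=c\cdot d_M$ for each of the three forms (\ref{dD}), (\ref{dC}), (\ref{dJ}), which upgrades $d_M^{2\alpha-2}$ to $d_M^{2\alpha-1}$ in $\mathcal{L}(d_M;\lambda_\rho)$. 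These feed into the product-rule expansion (\ref{formaRomn}) to yield the claimed $d_M^{3\alpha-2}d_N^{3\beta-2}$. Your direct expansion of (\ref{formaC}) would have to rediscover exactly these cancellations, and until you do, the factorization in (\ref{4}) is unproven.

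For (\ref{3}) your addendum arguments (the $\alpha=\beta=1$ and $\alpha,\beta\ge 2$ cases) are correct and match the paper. But your main argument for strict plurisubharmonicity on the tube minus the ball is incomplete: you try $A=0$ plus continuity, correctly note that $A$ is not restricted here, and then defer to ``an explicit sign computation'' without giving one. The paper's argument works for \emph{all} $A$ in the three normal forms and does not reduce to $A=0$: using (\ref{formaRomn}), the strictly plurisubharmonic terms $m\,d_M^{m-1}d_N^n\,\mathcal{L}(d_M;\lambda)\ge c\,m\,d_M^{m-1}d_N^n|\lambda|^2$ dominate the mixed term (which is $O(\sqrt{d_Md_N}\,d_M^{m-1}d_N^{n-1}|\lambda|^2)$) once $d_M$ is small and $d_N$ is bounded below, i.e.\ near $M$ and away from the origin; symmetrically near $N$. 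This is the missing idea in your treatment of (\ref{3}).
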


\begin{proof}
Property (\ref{1}) is an immediate consequence of the definition of $\rho$.

Next, we fix $m,n\geq 1$ and for any $\lambda=\sum_{j=1}^{2}\lambda_j\frac{\partial}{\partial z_j} \in T(\mathbb{C}^2)$ we obtain
\begin{align}\label{formaRomn}
           \mathcal{L}(d_M ^{m}d_N^n;\lambda) &=  
                                                 md_M ^{m-1}d_N ^{n} \mathcal{L}( d_M;\lambda)
                                                 +(m-1)m d_M ^{m-2}d_N ^{n}\left|\sum_{j=1}^{2}\frac{\partial d_M}{\partial z_j}\lambda_j\right|^2\\
                                              & \quad
                                                 +2mn d_N ^{n-1}d_M ^{m-1}\textrm{Re}\left(\left(\sum_{j=1}^{2}\frac{\partial d_M}{\partial z_j}\lambda_j\right)
                                                     \left(\sum_{j=1}^{2}\frac{\partial d_N}{\partial \overline{z}_j}\overline{\lambda}_j\right)\right)\nonumber \\
                                              & \quad
                                                      +n d_N ^{n-1}d_M ^{m} \mathcal{L}(d_N;\lambda) 
                                                      +(n-1)n d_N ^{n-2}d_M ^{m}\left|\sum_{j=1}^{2}\frac{\partial d_N}{\partial z_j}\lambda_j\right|^2. \nonumber 
\end{align}

It is well known and also very easy to check that the squared distance functions $d_M$ and $d_N$ respectively to totally real subspaces 
$M$ and $N$ are strictly plurisubharmonic. Moreover, there exists a constant $c>0$ such that 
\[
                \mathcal{L}(d_M;\lambda)\geq c|\lambda|^2, \qquad \mathcal{L}(d_N;\lambda)\geq c|\lambda|^2, \qquad \lambda \in T(\mathbb{C}^2).
\]
For some real constant $b>0$ we also have 
\[
       \left| \left(\sum_{j=1}^{2}\frac{\partial d_M}{\partial z_j}\lambda_j\right)
       \left(\sum_{j=1}^{2}\frac{\partial d_N}{\partial \overline{z}_j}\overline{\lambda}_j\right)\right|
                                                                                                         \leq b \sqrt{d_N d_M} \,\bigl|\lambda\bigr|^2, \qquad 
                                                                                                                                    \lambda \in T(\mathbb{C}^2).
\]
Therefore, if we are sufficiently far away from $N$ and close enough to $M$, but not on $M$, the term 
$\,m\,d_M ^{m-1}d_N ^{n}\,\mathcal{L}( d_M;\lambda)$ in (\ref{formaRomn}) will dominate the third term in (\ref{formaRomn}), and will 
thus make $\mathcal{L}(d_M ^{m}d_N^n;\lambda)$ positive there, for all $\lambda$. Similary, the term $n d_N ^{n-1}d_M ^{m} \mathcal{L}(d_N;\lambda)$ 
makes $\mathcal{L}(d_M ^{m}d_N^n;\lambda)$ positive, provided that we are far away from $M$ and close to $N$, but not on $N$. Hence 
$\rho=d_M ^{\alpha+1}d_N^{\beta}+d_M ^{\alpha}d_N ^{\beta+1}$ satisfies the first part of the statement (\ref{3}). Clearly, since 
$\nabla d_M$ vanishes on $M$ and $\nabla d_N$ vanishes on $N$, the Levi form of $\rho$ is positive on $(M\cup N)\setminus \{0\}$ for 
$\alpha=\beta=1$, and vanishes on $M\cup N$ for $\alpha,\beta\geq 2$. This concludes the proof of (\ref{3}).

To prove (\ref{4}) we need to factor $\mathcal{L}(\rho;\lambda_{\rho})$ (see (\ref{formaC0})) into a product of 
$d_M^{3\alpha-2} d_{N} ^{3\beta-2}$ and a polynomial in variables $x,y,u,v$, and with coefficients depending on the entries of $A$. Here we have
\begin{equation}\label{lambdaRoMN}
                            \lambda_{\rho}=\Bigl((\alpha+1) d_N^{\beta}d_M^{\alpha}+\alpha d_N^{\beta+1}d_M^{\alpha-1}\Bigr)\lambda_{d_M}
                                           +\Bigl((\beta+1)d_N^{\beta}d_M^{\alpha}+ \beta d_N^{\beta-1}d_M^{\alpha+1}\Bigr)\lambda_{d_N}.
\end{equation}

Firstly, since $\sum_{j=1}^{2}\frac{\partial d_M}{\partial z_j}{\lambda_{d_M}}_j=0$ and 
$\sum_{j=1}^{2}\frac{\partial d_N}{\partial z_j}{\lambda_{d_N}}_j=0$, we can clearly factor 
$\sum_{j=1}^{2}\frac{\partial d_M}{\partial z_j}{\lambda_{\rho}}_j$ and $\sum_{j=1}^{2}\frac{\partial d_N}{\partial z_j}{\lambda_{\rho}}_j$ 
respectively into a product of $d_M^{\alpha}$ or $d_N^{\beta}$ and a polynomial in variables $x,y,u,v$.

Next, we observe that $d_M^{2\alpha-2}$ (respectively $d_N^{2\beta-2}$) factor out of $\mathcal{L}(d_M;\lambda_{\rho})$ or 
$\mathcal{L}(d_N;\lambda_{\rho})$, trivially. An easy and straightforward computation by using (\ref{formaC}) shows further 
that $\mathcal{L}(d_N;\lambda_{d_N})=\frac{1}{2}d_N$, while if $d_M$ is as in (\ref{dD}), (\ref{dC}) and (\ref{dJ}) respectively, 
we obtain $\mathcal{L}(d_M;\lambda_{d_M})=\frac{1}{2}d_M$, or we get $\mathcal{L}(d_M;\lambda_{d_M})=\frac{((1+a^2+d^2)^2-4d^2)}{2(1+a^2+d^2)^{2}}d_M$ 
and $\mathcal{L}(d_M;\lambda_{d_M})=\frac{(1+a^2)^2}{2((1+a^2)^2+d^2)}d_M$. Hence $d_M^{2\alpha-1}$ and $d_N^{2\beta-1}$ factor out 
of $\mathcal{L}(d_M;\lambda_{\rho})$ and $\mathcal{L}(d_N;\lambda_{\rho})$, respectively.

From (\ref{formaRomn}) applied in the cases $m=\alpha+1$, $n=\beta$ and $m=\alpha$, $n=\beta+1$ respectively, it now follows immediately that 
$\mathcal{L}(\rho;\lambda_{\rho})$ is factored into a product of $d_M^{3\alpha-2}d_N^{3\beta-2}$ and a polynomial in variables $x,y,u,v$. 
However, there are terms of $\mathcal{L}(\rho;\lambda_{\rho})$ which include $d_M^{3\alpha+3}$ as a factor. For $d_M$ as in case (\ref{dD}) 
we then obtain that $\mathcal{L}(\rho;\lambda_{\rho})$ is of the form
\begin{equation}\label{case21k}
                                  \mathcal{L}(\rho;\lambda_{\rho})=\frac{1}{(1+a^2)^5(1+d^2)^5}d_M^{3\alpha-2} d_{N} ^{3\beta-2} P,
\end{equation}
where $P$ is a homogeneous polynomial of degree $10$ in variables $x,y,u,v$ and the coefficients of $P$ are polynomials in variables $a$ and $d$. 
If $d_M$ is of the form (\ref{dC}) or (\ref{dJ}) respectively, we have
\begin{equation}\label{case22k}
                                 \mathcal{L}(\rho;\lambda_{\rho})=\frac{1}{(1+a^2+d^2)^5}d_M^{3\alpha-2} d_{N} ^{3\beta-2} P,
\end{equation}
and
\begin{equation}\label{case23k}
                                 \mathcal{L}(\rho;\lambda_{\rho})=\frac{1}{(1+a^2)^5((1+a^2)^2+d^2)^5}d_M^{3\alpha-2} d_{N} ^{3\beta-2} P,
\end{equation}
where $P$ again has all the properties required. This concludes the proof of the lemma. 
\end{proof}

We note here that by choosing suitable substitutions, it is also possible to compute explicitly the polynomial $P$ in Lemma \ref{lemamn} (\ref{4}), 
but on the other hand this might involve very long expansions of polynomials. (See also the proof of Lemma \ref{lema11} for this approach in 
the special case $A=0$.)

Before stating a key lemma of our construction we prove the following argument on homogeneous polynomials.

\begin{lemma}\label{lemaPR}
Let $Q,R\in \mathbb{R}[x_1,x_2,\ldots,x_m]$ be real homogeneous polynomials in $m$ variables and of even degree $s$. Assume further that $Q$
is vanishing at the origin and is positive elsewhere. Then for any sufficiently small constant $\epsilon_0 >0$, it follows that 
$Q\geq \epsilon_0\cdot |R|$, with equality precisely at the origin.
\end{lemma}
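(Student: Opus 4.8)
The plan is to reduce the statement to a compactness argument on the unit sphere $S^{m-1}\subset\mathbb{R}^m$. Since $Q$ and $R$ are homogeneous of the same even degree $s$, both sides of the inequality $Q\geq\epsilon_0|R|$ scale by $|t|^s$ under $x\mapsto tx$, so it suffices to prove $Q(x)\geq\epsilon_0|R(x)|$ for all $x$ on the unit sphere, and then the equality case at the origin follows from homogeneity (note $s\geq 2$, so $Q(0)=R(0)=0$).

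First I would restrict $Q$ and $R$ to the compact set $S^{m-1}$. By hypothesis $Q$ is positive on $\mathbb{R}^m\setminus\{0\}$, so in particular $Q>0$ everywhere on $S^{m-1}$; since $Q$ is continuous and $S^{m-1}$ is compact, $Q$ attains a positive minimum $c:=\min_{x\in S^{m-1}}Q(x)>0$. Likewise $|R|$ is continuous on the compact set $S^{m-1}$, hence attains a finite maximum $C:=\max_{x\in S^{m-1}}|R(x)|\geq 0$. If $C=0$ then $R$ vanishes identically on $S^{m-1}$, hence (by homogeneity) $R\equiv 0$, and the inequality $Q\geq\epsilon_0|R|$ holds trivially for every $\epsilon_0>0$ with equality exactly at the origin because $Q>0$ off the origin. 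Otherwise $C>0$, and I would set $\epsilon_0:=c/C>0$; then for every $x\in S^{m-1}$ we have $Q(x)\geq c=\epsilon_0 C\geq\epsilon_0|R(x)|$, and any smaller positive constant works as well.

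To pass from the sphere to all of $\mathbb{R}^m$, take an arbitrary $x\neq 0$, write $x=t\omega$ with $t=|x|>0$ and $\omega=x/|x|\in S^{m-1}$, and use homogeneity: $Q(x)=t^sQ(\omega)\geq t^s\epsilon_0|R(\omega)|=\epsilon_0|R(x)|$. At $x=0$ both sides vanish. Finally, for the equality statement: if $Q(x)=\epsilon_0|R(x)|$ for the chosen $\epsilon_0<c/C$ (strictly smaller), then at any $x\neq 0$ we would get $Q(\omega)=\epsilon_0|R(\omega)|<c$, contradicting $Q(\omega)\geq c$; hence equality forces $x=0$. This handles the claim for any sufficiently small $\epsilon_0>0$.

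There is essentially no hard part here — the whole content is the observation that homogeneity reduces an inequality on $\mathbb{R}^m$ to an inequality on the compact sphere, where extreme-value considerations finish it. The only point requiring a moment's care is the degenerate case $R\equiv 0$ (equivalently $C=0$), which must be separated out so that the ratio $c/C$ is well-defined; once that case is dispatched, the argument is a one-line compactness estimate.
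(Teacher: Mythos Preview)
Your proof is correct and follows essentially the same route as the paper: restrict to the unit sphere, use compactness to get $c=\min_{S^{m-1}}Q>0$ and $C=\max_{S^{m-1}}|R|$, then extend by homogeneity. Your treatment is in fact slightly more careful than the paper's in two places---you separate out the degenerate case $R\equiv 0$, and you verify the equality claim precisely by taking $\epsilon_0<c/C$ strictly---but the core argument is identical.
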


\begin{proof}
By $||x||=\sqrt{x_1^2+x_2^2+\ldots+x_m^2}$ we denote the standard Euclidean norm of $x=(x_1,x_2,\ldots,x_m)\in \mathbb{R}^m$.

Since $Q$ is vanishing at the origin and is positive elsewhere, there exists a constant $c>0$ such that $Q(x)\geq c$ for 
all $x$ on the unit sphere, i.e. $||x||=1$. Also, there exists a constant $C>0$ such that $|R(x)|\leq C$ for any $x$ on the unit sphere.

However, homogeneous polynomials are uniquely determined by their values on a unit sphere. Thus we have
$Q(x)\geq c||x||^s$ and $|R(x)|\leq C||x||^s$ for any $x$, and with equalities precisely at the origin. The conclusion of the lemma now clearly follows.
\end{proof}

The following lemma is essential in the proof of Theorem \ref{izrek}, where we construct Stein neighborhoods.

\begin{lemma}\label{lema11}
Let $A$, $M$, $d_M$, $N$ and $d_N$ be as in Lemma \ref{lemamn} and let the function $\rho$ be defined as 
\[
     \rho=d_M ^{2}d_N+d_M d_N ^{2}.
\]
If the entries of $A$ are sufficiently close to zero, then for any $\epsilon>0$ the sublevel set 
$\Omega_{\epsilon}=\{\rho <\epsilon\}$ is strongly Levi pseudoconvex.
\end{lemma}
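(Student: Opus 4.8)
The plan is to reduce the statement to the positivity of a single homogeneous polynomial, and to settle that positivity by an explicit computation at $A=0$ followed by a perturbation argument. Write $\rho = d_M^2 d_N + d_M d_N^2$; this is the case $\alpha=\beta=1$ of Lemma \ref{lemamn}, so all conclusions of that lemma are available. Since $d_M$ and $d_N$ are quadratic forms, $\rho$ is homogeneous of degree $6$, so $\rho(tz)=t^6\rho(z)$; in particular each $\Omega_\epsilon=\{\rho<\epsilon\}$ is star-shaped about $0$ and hence a domain. Fix $\epsilon>0$. Because $\rho$ vanishes exactly on $M\cup N$ while, by Lemma \ref{lemamn}\,(\ref{1}), $\{\nabla\rho=0\}=M\cup N$, the boundary $b\Omega_\epsilon=\{\rho=\epsilon\}$ is disjoint from $M\cup N$ and $d\rho$ does not vanish along it; thus $b\Omega_\epsilon$ is a smooth real hypersurface. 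As $\mathbb{C}^2$ has complex dimension two, the complex tangent line $T_z^{\mathbb{C}}(b\Omega_\epsilon)$ is spanned at each $z\in b\Omega_\epsilon$ by the vector $\lambda_\rho$ of (\ref{vektorC}), and $\lambda_\rho\neq 0$ there since $\nabla\rho\neq 0$. Consequently $\Omega_\epsilon$ is strongly Levi pseudoconvex if and only if $\mathcal{L}(\rho;\lambda_\rho)>0$ at every point of $b\Omega_\epsilon$.

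By Lemma \ref{lemamn}\,(\ref{4}) with $\alpha=\beta=1$ we have $\mathcal{L}(\rho;\lambda_\rho)=\frac{1}{k}\,d_M\,d_N\,P$, where $k>0$ is a polynomial in the entries of $A$ and $P=P_A$ is homogeneous of degree $10$ in $x,y,u,v$ with coefficients polynomial in the entries of $A$. Along $b\Omega_\epsilon$ one has $d_M d_N>0$ (since $b\Omega_\epsilon\cap(M\cup N)=\varnothing$), so the whole lemma reduces to the claim: \emph{if the entries of $A$ are close enough to $0$, then $P_A>0$ on $\mathbb{R}^4\setminus\{0\}$}. Note this reformulation no longer involves $\epsilon$, so it takes care of all the sublevel sets at once.

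For $A=0$ one has $d_M=x^2+u^2$ and $d_N=y^2+v^2$; substituting these into (\ref{formaC}) and dividing out the factor $d_M d_N$ — which divides by Lemma \ref{lemamn}\,(\ref{4}) — produces a concrete homogeneous polynomial $P_0$ of degree $10$. I would then verify that $P_0>0$ on $\mathbb{R}^4\setminus\{0\}$: a leading‑order computation near $M_0=\{x=u=0\}$ shows that $P_0$ equals $\tfrac12(y^2+v^2)^5$ on that plane, and by the symmetry $z\mapsto iz$ (which swaps $M_0$ and $N$ and preserves $\rho$) it equals $\tfrac12(x^2+u^2)^5$ on $N=\{y=v=0\}$, so $P_0$ is positive on $(M_0\cup N)\setminus\{0\}$; one then checks, from the explicit formula for $P_0$ (e.g. after a convenient substitution, or by exhibiting a manifestly nonnegative decomposition), that $P_0$ stays positive in the remaining directions. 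Finally, since in each of the normal forms (\ref{dD}), (\ref{dC}), (\ref{dJ}) the distance $d_M$ tends to $x^2+u^2$ as the entries of $A$ tend to $0$, the difference $P_A-P_0$ is a finite sum $\sum_i m_i(A)\,R_i(x,y,u,v)$ with each $m_i$ a monomial of positive degree in the entries of $A$ and each $R_i$ homogeneous of degree $10$. Applying Lemma \ref{lemaPR} with $Q=P_0$ and $R=R_i$ (and taking the least of the finitely many resulting constants) gives $\epsilon_0>0$ with $P_0\geq\epsilon_0|R_i|$ for all $i$, so
\[
P_A = P_0 + \sum_i m_i(A)\,R_i \geq \Bigl(1-\frac{1}{\epsilon_0}\sum_i|m_i(A)|\Bigr)\,P_0 ,
\]
which is strictly positive on $\mathbb{R}^4\setminus\{0\}$ once the entries of $A$ are small enough that $\sum_i|m_i(A)|<\epsilon_0$. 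Combined with the reduction above, this shows that $\Omega_\epsilon$ is strongly Levi pseudoconvex for every $\epsilon>0$.

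The main obstacle is the explicit computation of $P_0$ together with the verification that it is positive definite: $\mathcal{L}(\rho;\lambda_\rho)$ is a homogeneous polynomial of degree $14$ with a large number of terms, so one must both extract the factor $d_M d_N$ cleanly and then certify the positivity of the remaining degree‑$10$ polynomial (this is the "suitable substitutions" and potentially long expansion mentioned in the discussion following Lemma \ref{lemamn}). By contrast, the reduction to a statement about $P_A$, the passage to $A=0$, and the perturbation argument based on Lemma \ref{lemaPR} are essentially formal.
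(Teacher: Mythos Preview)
Your overall strategy matches the paper's: reduce via Lemma~\ref{lemamn}\,(\ref{4}) to the positivity of a degree-$10$ homogeneous polynomial $P_A$, prove positivity at $A=0$, and then perturb with Lemma~\ref{lemaPR}. The perturbation step you write down is essentially the same as the paper's (the paper applies Lemma~\ref{lemaPR} to each degree-$10$ monomial rather than to the $R_i$, but this is immaterial).

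The genuine gap is the step you flag as ``the main obstacle'' and then leave undone: you never actually establish that $P_0>0$ on $\mathbb{R}^4\setminus\{0\}$. Checking $P_0$ on the two coordinate planes $M_0$ and $N$ is not evidence of positivity elsewhere; if anything, those are the easy directions (where $\Delta=xv-uy$ vanishes), and a degree-$10$ form that is positive on two transverse planes can certainly be negative somewhere in between. This is exactly the step on which the whole lemma hinges, and the paper carries it out explicitly. With the substitutions $V=y^2+v^2$, $Z=x^2+u^2$, $\omega=V+Z$, $\Delta=xv-uy$ one computes (using (\ref{formaRomn}) for $\rho_0$) that
\[
\mathcal{L}(\rho_0;\lambda_{\rho_0})=\tfrac12\,VZ\,\omega\bigl[(5VZ+\omega^2)(2VZ+\omega^2)-12\Delta^2(\omega^2-VZ)\bigr],
\]
so $Q=P_0$ (your notation) equals $\tfrac12\,\omega$ times the bracketed degree-$8$ form. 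The Cauchy--Schwarz inequality $\Delta^2\le VZ$ then gives the explicit lower bound
\[
(5VZ+\omega^2)(2VZ+\omega^2)-12\Delta^2(\omega^2-VZ)\ \ge\ 22(VZ)^2-5VZ\,\omega^2+\omega^4\ \ge\ \tfrac{63}{88}\,\omega^4,
\]
which is what actually certifies $P_0>0$ away from the origin. Without this (or an equivalent positivity certificate), your argument is incomplete: ``one then checks'' is precisely the content of the lemma.
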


\begin{proof}
By Lemma \ref{lemamn} the Levi form of $\rho=d_M ^{2}d_N+d_M d_N ^{2}$ in complex tangent direction 
$\lambda_{\rho}$ (see (\ref{vektorC})) to the boundary of its sublevel set $\Omega_{\epsilon}=\{\rho<\epsilon\}$ 
is of the form
\begin{equation}\label{LF}
                            \mathcal{L}(\rho;\lambda_{\rho})=\frac{1}{k}\,d_M d_{N} P, \qquad \lambda_{\rho} \in T^{\mathbb{C}}(b \Omega_{\epsilon}),
\end{equation}
where $k$ is a positive polynomial in variables $a$, $d$ (see (\ref{case21k}), (\ref{case22k}) or (\ref{case23k}), 
and $P$ is a homogeneous polynomial of degree $10$ in variables $x,y,u,v$. Furthermore, the coefficients of 
the polynomial $P$ are polynomials in variables $a$ and $d$; these are the entries of $A$ (see (\ref{dD}), (\ref{dC}) or (\ref{dJ})).

We now write $P$ as a sum of two polynomials in variables $x,y,u,v$:
\begin{equation}\label{ulema}
                              P=Q+R,
\end{equation} 
where the coefficients of $Q$ do not depend on $a$ or $d$, and the coefficients of $R$ are polynomials in 
variables $a,d$, and they are in addition without constant term.

Observe further that for $a=d=0$, the Levi form in (\ref{LF}) is equal to the product $(x^2+u^2)(y^2+v^2)\,Q$. 
On the other it is precisely equal to the Levi form of the function 
\[
           \rho_0(x,y,u,v)=(x^2+u^2)^2(v^2+y^2)+(x^2+u^2)(v^2+y^2)^2
\]
in complex tangent direction $\lambda_{\rho_0}$ to the boundary of its sublevel set, which means that 
\begin{equation}\label{LeviFormaRho0}
                                         \mathcal{L}(\rho_0;\lambda_{\rho_0})=(x^2+u^2)(y^2+v^2)\,Q.
\end{equation}

In order to be able to simplify the computation of the Levi form of $\rho_0$ by using (\ref{formaC0}) and 
(\ref{formaRomn}), we now need to substitute certain expressions by suitable new variables. We introduce the notation
\begin{equation}\label{oVZ}
                                         V=v^2+y^2, \qquad Z=u^2+x^2, \qquad \omega=V+Z,
\end{equation} 
respectively. With the new notation, we apply formula (\ref{formaRomn}) for $d_M=Z$, $d_N=V$ in the cases 
$m=2$, $n=1$ and $m=1$, $n=2$. After adding the obtained expressions and slightly regrouping like terms, we get 
\begin{align}\label{formaRo0VZ}
        \mathcal{L}(\rho_0;\lambda) &=
                                      (2Z V + V^2) \mathcal{L}(Z;\lambda)+(Z^2+2 V Z) \mathcal{L}(V;\lambda) \\
                                    & \quad
                                      +(4 Z +4 V)\,\textrm{Re}\left(\left(\sum_{j=1}^{2}\frac{\partial Z}{\partial z_j}\lambda_j\right)
                                                        \left(\sum_{j=1}^{2}\frac{\partial V}{\partial \overline{z}_j}\overline{\lambda}_j\right)\right)\nonumber \\
                                    & \quad
                                      +2 V \left|\sum_{j=1}^{2}\frac{\partial Z}{\partial z_j}\lambda_j\right|^2
                                      +2 Z\left|\sum_{j=1}^{2}\frac{\partial V}{\partial z_j}\lambda_j\right|^2. \nonumber 
\end{align}

Next, observe that  
\[
            \frac{\partial Z}{\partial z_1}=x, \qquad \frac{\partial Z}{\partial z_2}=u, \qquad 
            \frac{\partial V}{\partial z_1}=-iy, \qquad \frac{\partial V}{\partial z_2}=-iv,
\]
and by (\ref{vektorC}) we also have
\begin{equation}\label{lambdaZV}
                         \lambda_Z=(u,-x), \qquad \lambda_V=(-iv,iy).
\end{equation}
By taking $\alpha=\beta=1$ and $d_M=Z$, $d_N=V$ in (\ref{lambdaRoMN}), we further obtain that
\begin{equation}\label{lambdaRo0ZV}
      \lambda_{\rho_0}=(Z+\omega)V\lambda_Z+(V+\omega)Z\lambda_V.
\end{equation}

An easy computation gives us
\begin{equation}\label{LeviVZ}
\mathcal{L}(V;\lambda)=\mathcal{L}(Z;\lambda)=\frac{1}{2}|\lambda|^2, \qquad \lambda \in T(\mathbb{C}^2).
\end{equation}
By combining (\ref{oVZ}), (\ref{lambdaZV}), (\ref{lambdaRo0ZV}), (\ref{LeviVZ}), and regrouping the terms, we now get
\begin{align}
\mathcal{L}(V;\lambda_{\rho_0}) &= \frac{1}{2}\Bigl((Z+\omega)^2V^2(u^2+x^2)+(V+\omega)^2Z^2(v^2+y^2)\Bigr)\\
                                &= \frac{1}{2}\Bigl((Z+\omega)^2V^2Z+(V+\omega)^2Z^2V\Bigr)\nonumber\\
                                &= \frac{1}{2}V Z \Bigl(VZ(Z+V)+4\omega VZ+\omega^2 (V+Z)\Bigr)\nonumber\\
                                &= \frac{1}{2}V Z \omega(5V Z+\omega^2).\nonumber
\end{align}

It is also easy to calculate
\begin{equation}\label{sumlambda}
   \sum_{j=1}^{2}\frac{\partial Z}{\partial z_j}{\lambda_{\rho_0}}_j=-i(V+\omega)Z\Delta, \qquad 
   \sum_{j=1}^{2}\frac{\partial V}{\partial z_j}{\lambda_{\rho_0}}_j=i(Z+\omega)V\Delta,
\end{equation}
where we denoted $\Delta=x v- u y$. By using (\ref{oVZ}) and (\ref{sumlambda}) we can regroup and 
simplify the sum of the last three terms in (\ref{formaRo0VZ}). We obtain
\begin{align*}
         &  -4\omega (V+\omega)(Z+\omega)V Z\Delta^2+2V\bigl((V+\omega)Z\Delta\bigr)^2+2Z\bigl((Z+\omega)V\Delta\bigr)^2\\
         &= -2V Z \Delta^2\Bigl( 2\omega (V+\omega)(Z+\omega)-\left(Z(V+\omega)^2+V(Z+\omega)^2\right)\Bigr)\\
         &= -2V Z \Delta^2\Bigl( 2\omega \left(VZ+\omega(V+Z)+\omega^2\right)-\left(Z V (V+Z)+4\omega V Z+\omega^2 (Z+V)\right)\Bigr)\\
         &= -2 V Z \Delta^2\Bigl( 2\omega (V Z+2\omega ^2)-\left(5\omega V Z+\omega^3\right)\Bigr)\\
         &= -6 V Z \omega \Delta^2(\omega^2-V Z).
\end{align*}

Finally, we have
\begin{align}
         \mathcal{L}(\rho_0;\lambda_{\rho_0}) &= \frac{1}{2}V Z \omega(5V Z+\omega^2)(4Z V + V^2+Z^2)-6 V Z \omega \Delta^2\left(\omega^2-V Z\right)\nonumber\\
                                              &= \frac{1}{2}V Z \omega\Bigl((5V Z+\omega^2)(2Z V + \omega^2)-12\Delta^2\left(\omega^2-V Z\right)\Bigr).\nonumber
\end{align}
After substituting $\omega,V,Z,\Delta$ in the above expression back by the variables $x,y,u,v$, and comparing it 
to (\ref{LeviFormaRho0}), we further obtain the factorization
\begin{equation}\label{P1}
                        Q(x,y,u,v)=\frac{1}{2}(x^2+y^2+u^2+v^2)P_0(x,y,u,v),
\end{equation}
where $P_0$ is a homogeneous polynomial of degree $8$ in variables $x,y,u,v$.

Next, we observe the sign of polynomial $P_0$. We use the Cauchy-Schwarz inequality 
\[
             \Delta ^2=(x v- y u)^2\leq (x^2+u^2)(y^2+v^2)=V Z
\]
in order to see that
\begin{align*}
          P_0 &=     (5V Z+\omega^2)(2Z V + \omega^2)-12\Delta^2\left(\omega^2-V Z\right)\\
              &\geq  22(V Z)^2-5 (V Z)\omega^2+\omega^4\nonumber \\
              &\geq  22\left(V Z-\frac{5}{44}\omega^2\right)^2+\frac{63}{88}\omega^4.\nonumber
\end{align*}
This proves that $P_0$ and hence also $Q$ (see (\ref{P1})), both vanish at the origin and are positive 
everywhere else. Moreover, we obtain that
\begin{equation}\label{ocenaP0}
                      P_0(x,y,u,v)\geq \frac{63}{88}(x^2+y^2+u^2+v^2)^4.
\end{equation}

We now show that polynomial $P$ in (\ref{ulema}) vanishes at the origin and is positive elsewhere, provided 
that the entries $a,d$ of the matrix $A$ are chosen sufficiently small. Recall that the polynomial $R$ (see (\ref{ulema})) is of the form 
\begin{equation}\label{cleniR}
             R(x,y,u,v)= \sum_{|\alpha|=10} S_{\alpha}(a,d)\,\,x ^{\alpha_1} y ^{\alpha_2}u ^{\alpha_3}v ^{\alpha_4},\qquad 
\end{equation}
where $\alpha=(\alpha_1,\ldots,\alpha_4)$ is a multiindex, and $S_{\alpha}$ is a polynomial in variables $a$, $d$. 
Remember also that all $S_{\alpha}$ are without constant terms and hence we have $S_{\alpha}(0,0)=0$. We denote by 
$N_0$ the number of terms of the polynomial $R$. Since $Q$ is a homogeneous polynomial of degree $10$ 
(see (\ref{P1})), which is positive everywhere except at the origin, we can use Lemma \ref{lemaPR} to get a 
constant $\epsilon_0>0$ such that 
\begin{equation}\label{admajhna}
             \frac{1}{N_0}Q \geq\epsilon_0 |x ^{\alpha_1} y ^{\alpha_2}u ^{\alpha_3}v ^{\alpha_4}|, \qquad  \alpha=(\alpha_1,\ldots,\alpha_4), \quad |\alpha|=10,
\end{equation}
where equality holds precisely at the origin. By continuity argument, we can also have $|S_{\alpha}(a,d)|<\epsilon_0$ for all $a$, $d$ small enough, and this estimate is uniform for all coefficients $S_{\alpha}$ of polynomial $R$. It then follows from (\ref{admajhna}) that for all sufficiently small $a$ and $d$, we have $Q\geq|R|$, with equality precisely at the origin. This implies that polynomial $P$ vanishes at the origin and is positive elsewhere. Finally, the Levi form of $\rho$ (see (\ref{LF})) is then positive in complex tangent direction to $b\Omega_{\epsilon}$ for any $\epsilon$. This completes the proof.
\end{proof}

\begin{remark}\label{ocenae}
By analyzing the part of the proof of Lemma \ref{lema11} where Lemma \ref{lemaPR} was applied, we can tell how 
small the entries of the matrix $A$ in the assumption of Lemma \ref{lema11} can be. By combining (\ref{P1}) 
and (\ref{ocenaP0}) we see that
\begin{equation}\label{ocenaQ}
                  Q(x,y,u,v)\geq \frac{63}{176}(x^2+y^2+u^2+v^2)^5.
\end{equation}
As we expect the entries $a,d$ of the matrix $A$ to be smaller than one, we can roughly estimate 
the coefficients $S_{\alpha}$ of the polynomial $R$ (see (\ref{ulema}) and (\ref{cleniR})) by  
$|S_{\alpha}(a,d)|\leq N_{\alpha}\max\{|a|,|d|\}$, where $N_{\alpha}$ denotes the sum of moduli of 
coefficients of the polynomial $S_{\alpha}$. Thus we get
\begin{equation}\label{ocenaM}
   N_1\,N_0\max\{|a|,|d|\} \left(x^{10}+y^{10}+u^{10}+v^{10}\right)\geq |R(x,y,u,v)|,
\end{equation}
where $N_1=\max_{|\alpha|=10}N_{\alpha}$ and $N_0$ is the number of terms of $R$. It follows 
from (\ref{ocenaQ}) and (\ref{ocenaM}), that for any $|a|,|d|< \frac{63}{176\,N_0 N_1}$ we 
have $Q\geq|R|$, with equality precisely at the origin. 
\end{remark}

\begin{remark}
The conclusion of Lemma \ref{lema11} holds, for instance, also for the function $d_M ^2d_N^2+d_M d_N ^{3}$. 
One might expect to prove even more. But on the other hand it is not clear at the moment how that would 
improve the conclusion of the lemma for bigger entries of $A$. 
\end{remark}

\section{Regular Stein neighborhoods of the union of totally real planes}\label{baza}

A system of open Stein neighborhoods $\{\Omega_{\epsilon}\}_{\epsilon\in (0,1)}$ of a set $S$ 
in a complex manifold $X$ is called a {\em regular}, if for every $\epsilon \in (0,1)$ we have
\begin{enumerate}
\item $\Omega_{\epsilon}=\cup_{t<\epsilon}\Omega_t, \qquad \overline{\Omega}_{\epsilon}=\cap_{t>\epsilon}\Omega_{t}$,
\item $S=\cap_{\epsilon\in (0,1)}\Omega_{\epsilon}$ is a strong deformation retract of every $\Omega_{\epsilon}$ with $\epsilon\in (0,1)$.
\end{enumerate}

\begin{theorem}\label{izrek}
Let $A$ be a real $2\times 2$ matrix such that $A-iI$ is invertible. Further, let 
$M=(A+iI)\mathbb{R}^2$ and $N=\mathbb{R}^2$. If the entries of $A$ are sufficiently 
small, then the union $M\cup N$ has a regular system of strongly pseudoconvex Stein 
neighborhoods in $\mathbb{C}^2$. Moreover, away from the origin the neighborhoods coincide 
with sublevel sets of the squared Euclidean distance functions to $M$ and $N$, respectively. 
\end{theorem}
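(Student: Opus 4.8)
The plan is to build the global function $\rho$ on $\mathbb{C}^2$ by patching the local homogeneous model from Lemma \ref{lema11} near the origin with the distance-function model away from it, and then to check that the resulting sublevel sets give a regular Stein neighborhood system. First I would apply Lemma \ref{TR} to reduce to the normal form $N=\mathbb{R}^2$, $M=M(A)$ with $A$ in one of the three canonical shapes (\ref{dD}), (\ref{dC}), (\ref{dJ}); since linear isomorphisms preserve Steinness, strong pseudoconvexity, totally real subspaces, and (up to a linear change of the distance) the distance functions, it suffices to prove the theorem in these normal forms. Then I would fix the local defining function $\rho_{\mathrm{loc}}=d_M^2 d_N+d_M d_N^2$ of Lemma \ref{lema11}, which, for $A$ sufficiently small, is strictly plurisubharmonic on the complex tangent directions to each boundary $b\Omega_\epsilon$; by Lemma \ref{lemamn}(\ref{1}) it also satisfies $M\cup N=\{\rho_{\mathrm{loc}}=0\}=\{\nabla\rho_{\mathrm{loc}}=0\}$, so conditions (\ref{pogoj1})--(\ref{pogoj2}) of Sect.~\ref{baza} hold near the origin.

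Next I would globalize. The natural candidate away from the origin is $\sigma=d_M\cdot d_N$, whose zero set is exactly $M\cup N$ and which is strictly plurisubharmonic off a neighborhood of the origin (far from $M$ one has the $d_N$-Hessian dominating, far from $N$ the $d_M$-Hessian dominating, and near a regular point of $M\cup N$ away from $0$ both pieces are controlled — this is exactly the estimate already carried out in the proof of Lemma \ref{lemamn}(\ref{3})). To interpolate I would pick a smooth cutoff $\chi$ depending only on $\|z\|^2$, equal to $1$ near the origin and $0$ outside a small ball, and set $\rho=\chi\,\rho_{\mathrm{loc}}+(1-\chi)\,\sigma$, or — more cleanly, to keep the "coincides with distance functions away from the origin" clause literally true — realize the neighborhoods directly as $\Omega_\epsilon=\{\rho_{\mathrm{loc}}<\epsilon\}$ for $\epsilon$ small (which already lies in a fixed ball, by homogeneity of the degree-$6$ dominant term) and simply record that on that ball $\rho_{\mathrm{loc}}$ is comparable to $\sigma^{\,}$, so outside the ball the level sets match sublevel sets of $d_M$ and of $d_N$ separately. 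The cleanest packaging: choose $\epsilon_0$ so small that $\{\rho_{\mathrm{loc}}\le\epsilon_0\}$ is contained in the ball where Lemma \ref{lema11} applies; for $\epsilon<\epsilon_0$ the set $\Omega_\epsilon=\{\rho_{\mathrm{loc}}<\epsilon\}$ has smooth strongly Levi pseudoconvex boundary away from $M\cup N$ (strong pseudoconvexity, not merely Levi, is obtained because Lemma \ref{lemamn}(\ref{3}) shows $\rho_{\mathrm{loc}}$ is strictly plurisubharmonic in a full neighborhood of each boundary point outside a controlled ball, and one can absorb the bounded region by adding $\delta\|z\|^2$ if necessary), and then to pass from a strongly pseudoconvex \emph{defining function for the boundary} to a genuine Stein domain I would invoke the standard fact (Grauert) that a relatively compact strongly pseudoconvex domain is Stein, after smoothing the corner locus along $M\cup N$.

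For the retraction and the "regular" properties: conditions (1)--(2) in the definition of a regular system follow formally from $\Omega_\epsilon=\{\rho<\epsilon\}$ with $\rho\ge0$, $\rho^{-1}(0)=M\cup N$, $\rho$ proper on the relevant ball, and — crucially — $\nabla\rho\ne0$ off $M\cup N$, which is Lemma \ref{lemamn}(\ref{1}). The strong deformation retraction of each $\Omega_\epsilon$ onto $M\cup N$ is then the downward gradient flow of $\rho$ (as already noted in the remark after conditions (\ref{pogoj1})--(\ref{pogoj2})): since $\rho$ is real-analytic/polynomial with critical set exactly $M\cup N$, the flow is complete, decreases $\rho$ strictly off $M\cup N$, and its $t\to\infty$ limit map gives a continuous retraction homotopic to the identity rel $M\cup N$; one gets the strong deformation retract onto $M\cup N$ after reparametrizing time to compress $[0,\infty)$ into $[0,1]$. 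Here I would use that near $M\cup N$ the gradient flow lines actually reach $M\cup N$ in the limit because $\rho$ vanishes there to finite order with controlled lower bound on $|\nabla\rho|$ in the normal directions — this is where the explicit distance-function form of $\rho$ is convenient.

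The main obstacle is the interface: making sure that in the transition region (where $\chi$ is neither $0$ nor $1$, i.e. on an annulus around $0$) the patched function is still strictly plurisubharmonic, or that the level sets $\{\rho_{\mathrm{loc}}=\epsilon\}$ stay inside the good ball and remain strongly pseudoconvex there. Two things make this manageable: (i) by homogeneity, the dominant part $d_M^2 d_N+d_M d_N^2$ of $\rho_{\mathrm{loc}}$ is a homogeneous polynomial of degree $6$, so $\{\rho_{\mathrm{loc}}<\epsilon\}$ scales like $\epsilon^{1/6}$ and for small $\epsilon$ sits well inside the ball of Lemma \ref{lema11} — no genuine patching is needed for the \emph{small} levels; (ii) for the clause that the neighborhoods "coincide with sublevel sets of the squared Euclidean distance functions away from the origin," one verifies that outside a fixed ball $\rho_{\mathrm{loc}}$ is a strictly increasing function of $d_M$ along $\{d_N=\text{const}\}$ and vice versa, so $\{\rho_{\mathrm{loc}}<\epsilon\}\setminus B_r$ equals $(\{d_M<\epsilon_1\}\cup\{d_N<\epsilon_2\})\setminus B_r$ for suitable $\epsilon_1,\epsilon_2$; since each of $\{d_M<\epsilon_1\}$, $\{d_N<\epsilon_2\}$ is strongly pseudoconvex ($d_M$, $d_N$ being strictly plurisubharmonic), the whole $\Omega_\epsilon$ is strongly pseudoconvex and hence Stein. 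Assembling these three ingredients — reduction by Lemma \ref{TR}, the local Levi computation of Lemma \ref{lema11}, and the elementary matching with $d_M$, $d_N$ outside a ball, followed by Grauert's theorem and the gradient-flow retraction — completes the proof.
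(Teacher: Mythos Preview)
Your overall strategy --- use Lemma \ref{lema11} for the local model, run the gradient flow for the retraction, and invoke Grauert for Steinness --- matches the paper's. But the globalization step contains two genuine errors.

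First, your claim (i) that $\{\rho_{\mathrm{loc}}<\epsilon\}$ ``sits well inside the ball of Lemma \ref{lema11}'' for small $\epsilon$ is false: $\rho_{\mathrm{loc}}=d_M^2d_N+d_Md_N^2$ vanishes on all of $M\cup N$, which is unbounded, so every sublevel set is an unbounded tube, never contained in any ball. (There is also no ``ball where Lemma \ref{lema11} applies''; that lemma is a global statement about $\rho_{\mathrm{loc}}$ on all of $\mathbb{C}^2$.) Second, your claim (ii) that outside a fixed ball $\{\rho_{\mathrm{loc}}<\epsilon\}\setminus B_r$ equals $(\{d_M<\epsilon_1\}\cup\{d_N<\epsilon_2\})\setminus B_r$ for some constants $\epsilon_1,\epsilon_2$ is also false. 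Near $M$ and far from the origin one has $d_N$ large, so $\rho_{\mathrm{loc}}\approx d_M d_N^2$ and the boundary $\{\rho_{\mathrm{loc}}=\epsilon\}$ looks like $\{d_M\approx \epsilon/d_N^2\}$: the tube around $M$ has width that \emph{shrinks} as one moves away from the origin, and is never a constant-$d_M$ tube. So neither route you propose delivers the clause ``away from the origin the neighborhoods coincide with sublevel sets of $d_M$ and $d_N$.''

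The paper handles this by an honest patching: it keeps $\rho=\rho_{\mathrm{loc}}$ only on a fixed ball $B_r$, and on the complementary tubes $T_{\epsilon,M}$, $T_{\epsilon,N}$ (which are disjoint for small $\epsilon$) it glues in $d_M$ and $d_N$ themselves via a radial cutoff $\theta$ supported on $B_{2r}$, setting $\rho_0=\theta\rho+(1-\theta)d_M+(1-\theta)d_N$. Because $\rho$, $d_M$, $d_N$ and their gradients all vanish on $M\cup N$, the Levi form of $\rho_0$ at points of $(M\cup N)\setminus\{0\}$ is the convex combination $\theta\,\mathcal{L}(\rho)+(1-\theta)\mathcal{L}(d_M)+(1-\theta)\mathcal{L}(d_N)$, which is positive there; this controls the transition annulus $B_{2r}\setminus \overline{B}_r$ directly, without any estimate on cutoff derivatives. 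Finally, to upgrade from strongly Levi pseudoconvex to strongly pseudoconvex on $b\Omega_\epsilon\cap B_r$ (where $\rho_0=\rho_{\mathrm{loc}}$ is only known to be positive in the complex tangent direction), the paper does \emph{not} add $\delta\|z\|^2$ --- that would change the sublevel set --- but replaces the defining function by $\tilde\rho=(\rho_0-\epsilon)e^{C(\rho_0-\epsilon)}$ for large $C$, which has the same zero set and gains the missing normal positivity. Your sketch is missing both of these mechanisms.
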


As noted in Sect. \ref{Sb}, the general case of union of two totally real planes intersecting 
at the origin reduces to the situation described in the Theorem \ref{izrek}. Furthermore, 
we may assume that $M$ is of the form as in one of the three cases (\ref{dD}), (\ref{dC}) or (\ref{dJ}).

\begin{proof}
Lemma \ref{lema11} furnishes a function $\rho=d_M ^{2}d_N+d_M d_N ^{2}$, where $d_M$ and $d_N$ 
respectively are squared Euclidean distance functions to $M$ and $N$ in $\mathbb{C}^2$. For 
any $\epsilon>0$, a domain $\Omega_{\epsilon}=\{\rho <\epsilon\}$ is strongly Levi pseudoconvex. 
Also, the Levi form of $\rho$ is positive on $(M\cup N)\setminus\{0\}$ and we have 
$\{\rho=0\}=\{\nabla \rho=0\}=M \cup N$ (see Lemma \ref{lemamn}).

We proceed by patching $\rho$ away from the origin with the squared distance functions. First we 
choose open balls $B_r$ and $B_{2r}$ respectively, centered at $0$ and  with radii $r$ and $2r$. 
Next, for any $\epsilon >0$ we set 
\[
     T_{\epsilon,M}=\{z\in\mathbb{C}^2\setminus \overline{B}_{r}\colon d_M(z)<\epsilon\}, \qquad  
     T_{\epsilon,N}=\{z\in\mathbb{C}^2\setminus\overline{B}_{r}\colon d_N(z)<\epsilon\}
\]
and observe that for $\epsilon$ small enough the set $T_{\epsilon}=T_{\epsilon,M}\cup T_{\epsilon,N}$ 
is a disjoint union. We now glue $\rho$ on $B_{2r}$ with the restrictions 
$\rho_M=d_M|_{T_{\epsilon,M}}$ and $\rho_N=d_N|_{T_{\epsilon,N}}$:
\[
      \rho_0(z)=\theta(z)\rho(z)+\bigl(1-\theta(z)\bigr)\rho_M(z)+\bigl(1-\theta(z)\bigr)\rho_N(z),\qquad z\in B_{2r}\cup T_{\epsilon}.
\]
Here $\theta$ is a smooth cut-off function, which is supported on $B_{2r}$ and equals one on $B_r$. To be precise, we have 
$\theta=\chi(|z_1|^2+|z_2|^2)$, where $\chi$ is another suitable cut-off function with $\chi(t)=1$ for $t\leq r$ 
and $\chi(t)=0$ for $t\geq 2r$. Observe that $\rho_0$ coincides with $\rho$ on $\overline{B}_r$ and with $d_M$ or 
$d_N$ respectively on $T_{\epsilon,M}\setminus B_{2r}$ and $T_{\epsilon,N}\setminus B_{2r}$.

It is immediate that $\{\rho_0=0\}=M\cup N$ and that $\nabla\rho_0$ is vanishing on $M\cup N$. On 
$(B_{2r}\setminus \overline{B}_r)\setminus (M\cup N)$, but close to $M\cup N$, we have $\nabla\theta$ 
near to tangent directions to $M\cup N$, and $\nabla\rho_M$ or $\nabla\rho_N$ respectively are near to 
normal directions to $M$ and $N$. After possibly choosing $\epsilon$ smaller and shrinking $T_{\epsilon}$, 
we get $\{\nabla\rho_0=0\}=M\cup N$. Finally, the flow of the negative gradient vector field 
$-\nabla \rho_0$ gives us a deformation retraction of $\Omega_{\epsilon}=\{\rho_0<\epsilon\}$ onto 
$M\cup N$ for every $\epsilon$ small enough.

It remains to verify that the sublevel set $\Omega_{\epsilon}$ is indeed Stein, provided that 
$\epsilon$ is chosen small enough. Since $\rho$, $d_M$, $d_N$ and their gradients all vanish 
on $M\cup N$, this implies that for $z\in M\cup N$ and any $\lambda\in T_z(\mathbb{C}^2)$ we have 
\[
      \mathcal{L}_{(z)}(\rho_0;\lambda)=\theta (z)\mathcal{L}_{(z)}(\rho;\lambda)+\bigl(1-\theta(z)\bigr)\mathcal{L}_{(z)}(\rho_M;\lambda)
                                                                                 +\bigl(1-\theta(z)\bigr)\mathcal{L}_{(z)}(\rho_N;\lambda).
\]
The Levi form of $\rho_0$ is thus positive on $(M \cup N)\setminus \{0\}$. By choosing $\epsilon $ small 
enough, it is then positive on $\Omega_{\epsilon}\setminus B_r$. Furthermore, as $\rho_0$ coincides 
with $\rho$ on $B_r$, the Levi form of $\rho_0$ is positive in complex tangent direction to 
$b\Omega_{\epsilon}$ (by Lemma \ref{lema11}).

We now use a standard argument to get a strictly plurisubharmonic function in all directions also 
on $b\Omega_{\epsilon}\cap B_r$. Set a new defining function for $\Omega_{\epsilon}$:  
\begin{equation}\label{popravi}
                \tilde{\rho}(z)=\bigl(\rho_0(z)-\epsilon\bigr) e^{C(\rho_0(z)-\epsilon)},
\end{equation}
where $C$ is a large constant (to be chosen). By computation we get
\[
  \mathcal{L}_{(z)}(\tilde{\rho};\lambda)=\mathcal{L}_{(z)}(\rho_0;\lambda)+2C\Bigl|\sum_{j=1} ^2 \frac{\partial\rho_0}{\partial z_j}(z)\lambda_j\Bigr|^2, \quad 
                                           z\in b\Omega_{\epsilon},\, \lambda=\sum_{j=1}^{2}\lambda_j\frac{\partial}{\partial z_j} \in T_z(\mathbb{C}^2).
\]
After taking $C$ large enough the Levi form of $\tilde{\rho}$ becomes positive in all 
directions on $b\Omega_{\epsilon}$. This proves strong pseudoconvexity of $\Omega_{\epsilon}$. 
Since the restrictions of plurisubharmonic functions to analytic sets are plurisubharmonic 
and must satisfy the maximum principle (see \cite{GRpluri}), we cannot have any compact 
analytic subset of positive dimension in $\mathbb{C}^2$. As $\Omega_{\epsilon}\subset \mathbb{C}^2$ 
is strongly pseudoconvex, it is then Stein by a result of Grauert (see \cite[Proposition 5]{lit9}). 
This completes the proof.
\end{proof}

\begin{remark}
The assumption of taking sufficiently small entries of $A$ in Theorem \ref{izrek} is essential 
and enables the application of Lemma \ref{lema11} in the proof; see Remark \ref{ocenae} for the 
estimate how small the entries of $A$ can be.
\end{remark}

Lemma \ref{lema11} can also be applied to give an extension of a result on certain 
clo\-sed real surfaces immersed into a complex surface (\cite[Theorem 2.2]{lit1n} and \cite[Theorem 2]{lit2n}).

\begin{trditev}
Let $\pi\colon S\to X$ be an smooth immersion of a closed real surface into a Stein surface satisfying the following properties:
\begin{enumerate}
	\item $\pi$ has only transverse double points (no multiple points) $p_1,\ldots p_k$, 
	      and in a neighborhood of each double point $p_j$ ($j\in\{1,\ldots,k\}$), there 
	      exist holomorphic coordinates $\psi_j\colon U_j\to V_j\subset\mathbb{C}^2$ such 
	      that $\psi_j(\tilde{S}\cap U_j)=(\mathbb{R}^2\cup M_j)\cap V_j$, $\psi_j(p_j)=0$, 
	      where $\tilde{S}=\pi (S)$ and $M_j=(A_j+iI)\mathbb{R}^2$ with $A_j-iI$ invertible,
	\item $\pi$ has finitely many complex points $p_{k+1}, \ldots,p_m$, which are flat hyperbolic.
\end{enumerate}
If the entries of $A_j$ for all $j\in\{1,\ldots,k\}$ are sufficiently close to zero, then $\tilde{S}$ has 
a regular strongly pseudoconvex Stein neighborhood basis in $X$.
\end{trditev}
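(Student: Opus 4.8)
The plan is to reduce the global statement about the immersed surface $\tilde S$ to the local model studied in Theorem \ref{izrek} and to the known constructions at hyperbolic complex points, and then to patch everything together with a partition of unity exactly as in the proof of Theorem \ref{izrek}. First I would recall that, by Slapar's theorem (\cite[Theorem 2]{lit2n}) and Forstneri\v c's theorem (\cite[Theorem 2.2]{lit1n}), a closed real surface immersed into a Stein surface with only transverse double points and finitely many flat hyperbolic complex points admits a regular strongly pseudoconvex Stein neighborhood basis \emph{provided} that near each double point one has a suitable strictly plurisubharmonic defining function whose sublevel sets retract onto the local model $\mathbb{R}^2\cup M_j$; the only missing ingredient in the general case is precisely such a local function near a double point with $M_j=(A_j+iI)\mathbb{R}^2$ and $A_j\neq 0$. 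When $A_j=0$ this local function is the classical one used by Forstneri\v c, and when $A_j$ is diagonalizable with trace zero it is Slapar's function.

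Next I would invoke Lemma \ref{lema11}: since the entries of each $A_j$ are sufficiently close to zero, the function $\rho_j=d_{M_j}^2 d_N + d_{M_j} d_N^2$ in the local coordinates $\psi_j$ has strongly Levi pseudoconvex sublevel sets $\{\rho_j<\epsilon\}$ for all $\epsilon>0$, with $\{\rho_j=0\}=\{\nabla\rho_j=0\}=\mathbb{R}^2\cup M_j$ and with Levi form positive on $(\mathbb{R}^2\cup M_j)\setminus\{0\}$ (Lemma \ref{lemamn}). Transporting $\rho_j$ via $\psi_j^{-1}$ gives a local defining function near $p_j$ in $X$ whose sublevel sets retract onto $\tilde S\cap U_j$ via the negative gradient flow; this is exactly the input required to run the argument of \cite[Theorem 2]{lit2n}. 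Away from all the $p_1,\dots,p_k$ and from a neighborhood of each hyperbolic point, one uses a strongly plurisubharmonic exhaustion built from the squared distance to $\tilde S$ (which is available since $\tilde S$ is totally real there), and near each flat hyperbolic point one uses the model function supplied by \cite[Theorem 2]{lit2n}.

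Then I would patch: choose small balls around each $p_j$, glue $\rho_j$ on the inner ball with the squared-distance function on the complement using a cut-off function $\theta_j$ depending only on the squared distance to $p_j$, just as in the proof of Theorem \ref{izrek}. Since $\rho_j$, $d_{M_j}$, $d_N$ and all their gradients vanish on the local model, the Levi form of the patched function at points of $\tilde S$ is a convex combination of positive forms (off the double points), hence positive on $\tilde S\setminus\{p_1,\dots,p_k\}$, and after shrinking $\epsilon$ it is positive on the whole sublevel set minus the inner balls; on the inner balls it is positive in the complex tangent direction to the boundary by Lemma \ref{lema11}. The standard trick of replacing the defining function $\varrho$ by $(\varrho-\epsilon)e^{C(\varrho-\epsilon)}$ with $C$ large then upgrades Levi pseudoconvexity to strict plurisubharmonicity near $b\Omega_\epsilon$, and Grauert's criterion (\cite[Proposition 5]{lit9}), together with the maximum principle ruling out compact positive-dimensional analytic subsets of a strongly pseudoconvex domain in a Stein surface, gives Steinness. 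Finally one checks the two axioms of a regular neighborhood basis: the sublevel sets decrease to $\tilde S$ and each retracts onto $\tilde S$ by the negative gradient flow of the patched function.

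The main obstacle is conceptual rather than computational: it is verifying that the local function furnished by Lemma \ref{lema11} slots into the machinery of \cite{lit1n,lit2n} cleanly, i.e. that those proofs are genuinely local at the double points and use only the properties $\{\varrho=0\}=\{\nabla\varrho=0\}=$ (local model), strong Levi pseudoconvexity of the sublevel sets, and positivity of the Levi form on the smooth part. Granting this, the handling of the hyperbolic complex points and the global patching is entirely parallel to \cite[Theorem 2]{lit2n} and requires no new ideas, so the proof amounts to citing those references for the hyperbolic and global parts and citing Lemmas \ref{lemamn} and \ref{lema11} for the double-point parts.
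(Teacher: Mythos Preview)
Your proposal is correct and follows essentially the same route as the paper: local functions from Lemma~\ref{lema11} at the double points, from \cite[Lemma~8]{lit2n} at the hyperbolic points, the squared distance function elsewhere, partition-of-unity patching, the exponential trick $(\rho-\epsilon)e^{C(\rho-\epsilon)}$, and finally Grauert's theorem. The only minor implementation difference is that the paper composes its partition of unity with a retraction $r\colon U\to\tilde S$ (so the cut-offs are constant in the normal direction and the Levi-form identity on $\tilde S$ is immediate), whereas you patch with radial cut-offs as in Theorem~\ref{izrek}; both work for the same reason, namely that the $\varphi_j$ and their gradients vanish on $\tilde S$.
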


The proofs given in \cite[Theorem 2.2]{lit1n} and \cite[Theorem 2]{lit2n}) apply mutatis 
mutandis to our situation. For the sake of completeness we sketch the proof.

\begin{proof} 
By Lemma \ref{lema11} for every $j\in\{1,\ldots k\}$ there exists a smooth non-negative 
function $\rho_j\colon V_j \to \mathbb{R}$, which is strictly plurisubharmonic away from 
the origin and its sublevel sets $\{\rho_j<\epsilon\}$ are strongly Levi pseudoconvex. 
Furthermore, we have $\{\rho_j=0\}=\{\nabla \rho_j=0\}=(\mathbb{R}^2\cup M_j)\cap V_j$ 
(see also Lemma \ref{lemamn}). Next we set $\varphi_j=\rho_j\circ \psi_j\colon U_j\to \mathbb{R}$ 
and observe that $\varphi_j$ inherits the obove properties from $\rho_j$.

By \cite[Lemma 8]{lit2n} for every $j\in \{k+1,\ldots,m\}$ there exists a small neighborhood 
$U_j$ of a point $p_j$ and a smooth non-negative function $\varphi\colon U_j\to \mathbb{R}$, 
which is strictly plurisubharmonic on $U_j\setminus \{p_j\}$ and such that 
$\{\varphi_j=0\}=\{\nabla\varphi_j=0\}=\tilde{S}\cap U_j$.

Further, let $\varphi_0=d_{\tilde{S}}$ and $d_p$ respectively be the squared distance functions 
to $\tilde{S}$ or to $p\in \tilde{S}$ in $X$, with respect to some Riemannian metric on $X$. It 
is well known that the squared distance function to a smooth totally real submanifold is strictly 
plurisubharmonic in a neighborhood of the submanifold (see e.g. \cite[Proposition 2]{lit2n} or 
\cite[Proposition 4.1]{lit27}). Therefore $\varphi_0$ is strictly plurisubharmonic in some open 
neighborhood $U_0$ of $\tilde{S}\setminus \{p_1,\ldots,p_m\}$.

We now patch functions $\varphi_j$ for all $j\in \{0,1,\ldots,m\}$. First, denote 
$U=\cup_{j=0} ^{m} U_j$ and let $r\colon U\to \tilde{S}$ be a map defined as 
$r(z)=p$ if $d_{\tilde{S}}(z)=d_{p}(z)$. The map $r$ is well defined and smooth, 
provided that the sets $U_j$ are chosen small enough. Next, we choose a partition of 
unity $\{\theta_j\}_{0\leq j\leq m}$ subordinated to $\{U_j\cap \tilde{S}\}_{0\leq j\leq m}$, 
and such that for every $j\in\{1,\ldots,m\}$ the function $\theta_j$ equals one near the 
point $p_j$. Finally, we define
\[
               \rho(z)=\sum_{j=0} ^m \theta_j\bigl(r(z)\bigr) \varphi_j(z), \qquad z \in U.
\]
We see that $\tilde{S}=\{\rho=0\}$ and $\nabla\rho(z)=\sum_{j=0}^m \theta_j\bigl(r(z)\bigr) \nabla\varphi_j(z)$ 
for all $z\in U$, thus we further have
\[
        \mathcal{L}_{(p)}(\rho;\lambda)=\sum_{j=0} ^m \theta_j(p)\mathcal{L}_{(p)}(\varphi_j;\lambda), \qquad 
                                         p\in \tilde{S}, \quad \lambda \in T_p(U).
\]
After shrinking $U$ we obtain that $\{\nabla\rho=0\}=\tilde{S}$ and $\rho$ is 
strictly plurisubharmonic away from the points $p_{1}, \ldots, p_m$.

It is left to show that the sublevel sets $\Omega_{\epsilon}=\{\rho<\epsilon\}$ are Stein 
domains. Since $\rho$ coincides with $\varphi_j$ near $p_j$ for every $j\in\{1,\ldots,m\}$, 
the sublevel sets $\Omega_{\epsilon}$ are then strongly Levi pseudoconvex near $p_j$. For a 
given $\epsilon$ we can in a similar way as in  the proof of Theorem \ref{izrek} (see (\ref{popravi})) 
choose a positive constant $C$ such that $\tilde{\rho}(z)=\bigl(\rho(z)-\epsilon\bigr) e^{C(\rho(z)-\epsilon)}$ 
is a defining function for $\Omega_{\epsilon}$ and such that $\tilde{\rho}$ is strictly plurisubharmonic 
on $b\Omega_{\epsilon}$. The function $\tilde{\rho}$ might not be strictly plurisubharmonic only near 
the points $p_1,\ldots, p_m$. Since $X$ is Stein we globally have a strictly plurisubharmonic function 
and by standard cutting and patching techniques (see i.e. \cite{lit6}) we obtain a strictly plurisubharmonic 
exhaustion function for $\Omega_{\epsilon}$. By Grauert's theorem \cite[Theorem 2]{lit9} a domain 
$\Omega_{\epsilon}$ is then Stein.
\end{proof}


\bibliographystyle{amsplain}

\end{document}